\newtheorem{theorem}{Theorem}[section]
\newtheorem{corollary}{Corollary}[section]
\newtheorem{lemma}[theorem]{Lemma}
\newtheorem{remark}[theorem]{Remark}
\numberwithin{equation}{section}
\newcommand{\beq}{\begin{equation}}
\newcommand{\eeq}{\end{equation}}
\newcommand{\beqo}{\begin{equation*}}
\newcommand{\eeqo}{\end{equation*}}
\newcommand{\beqs}{\begin{eqnarray*}}
\newcommand{\eeqs}{\end{eqnarray*}} 
\newcommand{\beqn}{\begin{eqnarray}}
\newcommand{\eeqn}{\end{eqnarray}}
\def\eps{\varepsilon}
\def\rho{\varrho}
\begin{document}
\title{Interior $C^2$ estimate for Monge-Amp\`ere equation in dimension two}

\author[J. Liu]
{Jiakun Liu}
\address
	{
	School of Mathematics and Applied Statistics,
	University of Wollongong,
	Wollongong, NSW 2522, AUSTRALIA}
\email{jiakunl@uow.edu.au}



\maketitle

\baselineskip18pt

\parskip10pt

\begin{abstract}
We obtain a genuine local $C^2$ estimate for the Monge-Amp\`ere equation in dimension two, by using the partial Legendre transform. 
\end{abstract}

\section{Introduction}

Consider the Monge-Amp\`ere equation 
	\begin{equation}\label{MA}
		\det\,D^2u = f \quad\mbox{ in } B_R(0)\subset\mathbb{R}^2,
	\end{equation}
where $f$ is a given positive function. 
This equation has been intensively studied since the last century, which is largely motivated by related geometric problems, in particular the Weyl and Minkowski problems. 
We refer the readers to \cite{N53} for more discussions on these two problems and their connection with the Monge-Amp\`ere equation \eqref{MA}.

It is well known that a \emph{genuine} interior $C^2$ estimate is an important and challenging ingredient in obtaining the regularity of weak solutions to elliptic equations. 
By genuine, we mean that the estimate is uniform along a convergent sequence of continuous solutions, which is also sometimes called \emph{purely} interior $C^2$ estimate.
For the Monge-Amp\`ere equation in dimension $n=2$, the corresponding interior $C^2$ estimate is due to Heinz \cite{H59}.  
From the famous counterexample of Pogorelov \cite{P}, there is no genuine interior $C^2$ estimate for the Monge-Amp\`ere equation \eqref{MA} in higher dimensions ($n\geq3$). Instead, Pogorelov \cite{P} obtained an interior $C^2$ estimate for solutions satisfying an affine boundary condition, which is now called Pogorelov-type estimate, see \cite[\S17]{GT}.  

In his proof \cite{H59}, Heinz utilised a ``characteristic" theory that was previously exploited by Lewy for analytic Monge-Amp\`ere equations, and then derived an interior $C^2$ estimate by a property of univalent mappings that he established in \cite{H56}.
Although there is no explicit form of the $C^2$ estimate given in \cite{H59} (see Theorem 3 therein), we believe that by carefully tracing all constants in \cite[Theroem 11]{H56}, one should be able to derive an explicit $C^2$ estimate but that seems quite involved. 
Recently, another interesting proof using only maximum principle is given by Chen-Han-Ou in \cite{CHO} with a concise form that
	\begin{equation}\label{CHOest}
		|D^2u(0)| \leq C_1 e^{C_2 \sup_{B_R(0)}|u|^2/R^4}, 
	\end{equation}
where $C_1, C_2$ are some constants depending only on $\|f\|_{C^{1,1}}$, but independently of $u$. 
The method of \cite{CHO} is based on a suitable choice of auxiliary function involving the second derivatives of $u$ with respect to a continuous eigenvector field, and by an arduous calculation similar to Pogorelov's \cite{P}, the estimate \eqref{CHOest} can then be derived.

In this paper, we give a different and much simpler way of obtaining estimates analogous to \eqref{CHOest}, which also enables to reduce the regularity assumption on $f$. 
Our method is based on the partial Legendre transform, in which the related coordinate transform has been used in Lewy and Heinz's works on Monge-Amp\`ere equations in dimension two and can be traced back to Darboux in the 19th century. We refer the reader to Schulz's book \cite{Sch} for an excellent exposition of the regularity theory for Monge-Amp\`ere equations in dimension two. 
The partial Legendre transform is also useful in studying certain degenerate Monge-Amp\`ere equations, see e.g. \cite{DS,G97}.
For more recent development and applications, we refer the reader to the papers \cite{GP,L17} and references therein.

Now, we state our main result.
\begin{theorem}\label{mainthm}
Let $u\in C^4(B_R(0))$ be a convex solution to \eqref{MA} satisfying $u(0)=0$ and $Du(0)=0$. 
Assume that $0<C_0^{-1}\leq f\leq C_0$ and $f\in C^{\alpha}(B_R(0))$ for some $\alpha\in(0,1)$. Then
	\begin{equation}\label{est3}
		|D^2u(0)| \leq C_1 \left(\sup_{B_R}|u|^2 R^{-4} e^{C_2 \sup_{B_R}|u|^2/R^4} + 1 \right), 
	\end{equation}
where the constant $C_1$ depends on $C_0$ and $\|f\|_{C^\alpha (B_R(0))}$, and $C_2$ depends only on $C_0$. 
\end{theorem}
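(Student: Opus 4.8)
The plan is to linearise \eqref{MA} by a partial Legendre transform and then extract \eqref{est3} from the interior regularity theory of a single linear, uniformly elliptic equation in the plane.

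\textbf{Normalisation.} Rescaling $u\mapsto R^{-2}u(Rx)$ reduces matters to $R=1$ (this fixes $D^2u(0)$ and only rescales $\|f\|_{C^\alpha}$ by $R^\alpha$). Convexity together with $u(0)=Du(0)=0$ gives $u\ge0$; set $m:=\sup_{B_1}u$, so that the convexity gradient bound yields $|Du|\le Cm$ on $B_{3/4}$. Since the Monge--Amp\`ere operator and the hypotheses on $f$ are orthogonally invariant, it is enough to prove a lower bound $u_{11}(0)\ge c$ with $c^{-1}\le C_1(m^2e^{C_2m^2}+1)$: applying this to all rotations of $u$ gives $\lambda_{\min}(D^2u(0))\ge c$, whence $|D^2u(0)|=\lambda_{\max}(D^2u(0))=f(0)/\lambda_{\min}(D^2u(0))\le C_0/c$, which is \eqref{est3} (the $+1$ accounting for the unconditional bound $|D^2u(0)|\ge\sqrt{f(0)}\ge C_0^{-1/2}$).

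\textbf{The transform.} Since $u$ is convex with $\det D^2u>0$ we have $u_{11}>0$, so $\Phi(x_1,x_2):=(u_{x_1}(x_1,x_2),x_2)$ is a $C^3$ diffeomorphism of $B_{3/4}$ onto a domain $\Omega^\ast\ni(0,0)=\Phi(0)$. Putting $v(\xi,\eta):=x_1\xi-u(x_1,x_2)$ with $(x_1,x_2)=\Phi^{-1}(\xi,\eta)$, one computes $v_\xi=x_1$, $v_\eta=-u_{x_2}$, and
\[
v_{\xi\xi}=\tfrac1{u_{11}},\qquad v_{\xi\eta}=-\tfrac{u_{12}}{u_{11}},\qquad v_{\eta\eta}=-\tfrac{\det D^2u}{u_{11}}=-\tfrac f{u_{11}},
\]
so $v$ is convex in $\xi$, concave in $\eta$, satisfies $v(0,0)=0$, $Dv(0,0)=0$, $\sup_{\Omega^\ast}(|v|+|Dv|)\le C(1+m)$, and solves the linear uniformly elliptic equation
\[
v_{\eta\eta}+a(\xi,\eta)\,v_{\xi\xi}=0\ \text{ in }\Omega^\ast,\qquad a(\xi,\eta):=f\big(v_\xi(\xi,\eta),\eta\big)\in[C_0^{-1},C_0].
\]
Because $u_{11}(0)=1/v_{\xi\xi}(0,0)$ and, by the equation, $v_{\xi\xi}\ge C_0^{-1}|v_{\eta\eta}|$, the desired bound $u_{11}(0)\ge c$ follows from a \emph{lower} bound $|v_{\eta\eta}(0,0)|\ge c'$ --- that is, quantitative strict concavity at the origin of the profile $\eta\mapsto v(0,\eta)=-\min_{\{x_2=\eta\}}u$.

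\textbf{The elliptic estimate and the main obstacle.} The dependence on $f$ now enters only through $\|a\|_{C^\alpha}$, which is controlled once one has an a priori $C^{1,\beta}$ bound on $v$ (note $v_\xi$ is valued in $[-3/4,3/4]$); this bootstrap, together with Schauder estimates, produces an interior $C^{2,\alpha}$ bound for $v$ --- hence a two-sided control of the concave profile above and a lower bound for $|v_{\eta\eta}(0,0)|$ --- \emph{provided one works on balls well inside $\Omega^\ast$}. The crux is that $(0,0)$, while interior to $\Omega^\ast$, may be close to $\partial\Omega^\ast$, since $\Omega^\ast$ can be highly eccentric: its $\eta$-extent is of order $1$ while its $\xi$-width near $\eta=0$ is $\asymp\int u_{11}(\cdot,0)\,dx_1$. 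To handle this one affinely normalises $\Omega^\ast$ by a map $T$ so that $T\Omega^\ast$ is comparable to a ball, applies the Harnack and Schauder estimates to $v\circ T^{-1}$ there, and keeps track of how their constants degenerate with the ellipticity ratio of the transported operator --- which is a power of the eccentricity of $\Omega^\ast$, and is in turn bounded by a power of $m$ using $u_{11}u_{22}\ge C_0^{-1}$ and the boundedness of $u$. Carrying out this last step carefully, so that the accumulated loss is at most $e^{C_2m^2}$, is the main difficulty; the partial Legendre algebra, the reduction to $\lambda_{\min}$, and the a priori control of $\|a\|_{C^\alpha}$ are routine.
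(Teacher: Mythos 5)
Your opening moves match the paper: rescale, rotate so that $u_{11}(0)=\lambda_{\min}(D^2u(0))$, apply the partial Legendre transform, and reduce to interior regularity for the resulting uniformly elliptic equation $v_{\eta\eta}+a\,v_{\xi\xi}=0$. However, the reduction in the middle of your argument is backwards. You want $u_{11}(0)\geq c$, i.e.\ $v_{\xi\xi}(0,0)=1/u_{11}(0)\leq 1/c$, and via the equation $v_{\xi\xi}=|v_{\eta\eta}|/a$ this is equivalent to an \emph{upper} bound $|v_{\eta\eta}(0,0)|\leq C_0/c$, not a lower bound. A lower bound $|v_{\eta\eta}(0,0)|\geq c'$ gives $v_{\xi\xi}(0,0)\geq c'/C_0$, hence $u_{11}(0)\leq C_0/c'$, which is an upper bound on the \emph{smallest} eigenvalue and therefore useless for \eqref{est3}. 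Fortunately the fix is harmless to the overall strategy: the interior Schauder estimate for $v$ already yields the needed upper bound on $|D^2v(0,0)|$ (and hence on $v_{\xi\xi}(0,0)$), without any appeal to ``quantitative strict concavity of the profile.'' In fact $|D^2u(0)|=f(0)\,v_{\xi\xi}(0,0)\leq C_0|D^2v(0,0)|$, which is exactly the paper's chain of inequalities in Corollary \ref{co21}.

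The second and more serious issue is that the quantitative heart of the proof is deferred. You correctly identify that $(0,0)$ may sit very close to $\partial\Omega^\ast$ and that one must control this in terms of $m=\sup|u|$, but you stop at ``carrying out this last step carefully\ldots is the main difficulty.'' That difficulty is precisely the content of the paper's Lemmas \ref{lemball} and \ref{lemc}: a direct convexity computation shows $B_\delta(0)\subset\mathcal{P}(B_R)$ with $\delta=\min\{m_0/(2|Du|_{\partial B_R}),\ m_0/(2R)\}$ where $m_0=\inf_{\partial B_R}u$, and then a Trudinger--Wang style argument (integrating $u_{11}u_{22}\geq C_0^{-1}$ along a slab) produces the lower bound $m_0\gtrsim bR/(e^{CC_0b^2/R^2}-1)$ on the modulus of convexity. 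Without a proof of that estimate, the exponential factor in \eqref{est3} is unexplained, so the proposal as written is incomplete at exactly the step that carries all the weight. Finally, a minor remark: the affine normalisation of $\Omega^\ast$ you propose is more elaborate than necessary; once one has the inscribed ball of Lemma \ref{lemball}, the interior Schauder estimate can be applied directly in that ball with no change of variables, and the eccentricity of $\Omega^\ast$ never enters.
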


The idea of our proof is that after the partial Legendre transform, equation \eqref{MA} will become a quasilinear uniformly elliptic equation, in particular Laplace's equation when $f$ is a constant. Then by the interior estimates for quasilinear equations \cite{GT} and transforming back, we can obtain the corresponding interior estimates for equation \eqref{MA}. 
A key point is to estimate the \emph{modulus of convexity} $m=m[u]$ of $u$, which is defined by
	\begin{equation}\label{mconv}
		m(t) = \inf \{u(x) - \ell_z(x) : |x-z|>t\},
	\end{equation}
where $t>0$, $\ell_z$ is the supporting function of $u$ at $z$. Obviously $m$ is a nonnegative function of $t$. 
In dimension two, due to Aleksandrov \cite{A42} and Heinz \cite{H59}, the solution $u$ of \eqref{MA} must be strictly convex, and thus $m$ is a positive function. 
In \S\ref{s21}, we reveal an essential connection between the strict convexity and interior $C^2$ estimate.
In order to derive \eqref{est3}, we need a lower bound estimate of the modulus of convexity, which is obtained in \S\ref{s22} by elaborating the approach developed in \cite[\S3]{TW}. 

Last, we remark that the Monge-Amp\`ere equation \eqref{MA} also occurs in the study of special Lagrangian graphs in dimension two. 
The partial Legendre transform changes between the Monge-Amp\`ere and Laplace's equations corresponds to the change of phase $\Theta$ from $\frac{\pi}{2}$ to $0$, see \cite{Y20} for more discussions. 
The above interesting fact motivates and inspires our work in this paper.

\section{Proof of theorem}

The proof of Theorem \ref{mainthm} is divided into two parts: 
in \S\ref{s21}, we first derive an interior $C^2$ estimate in terms of the modulus of convexity $m$; 
and then in \S\ref{s22}, we give a lower bound estimate of $m$ and complete the proof of \eqref{est3}.

\subsection{Interior $C^2$ estimate} \label{s21}
Let $u$ be a convex solution of \eqref{MA} in $B_R\subset\mathbb{R}^2$. 
For a fixed $y_2\in(-R,R)$, the \emph{partial Legendre transform} $u^*$ of $u$ in the $e_1$-direction is given by
	\begin{equation}\label{pLeng}
		u^*(y_1,y_2) := \sup\{x_1y_1 - u(x_1,y_2)\},
	\end{equation}
where the supremum is taken on the slice $y_2$ is the fixed constant, namely for all $x_1$ such that $(x_1,y_2)\in B_R$. 
Notice that the supremum is attained when 
	\begin{equation}
		y_1=\partial_{x_1}u(x_1, x_2),\quad\mbox{where } x_2=y_2.
	\end{equation}
One can easily verify that the partial Legendre transform of $u^*$ is $u$, that is $(u^*)^*=u$. 	

Due to Aleksandrov \cite{A42} and Heinz \cite{H59} (see also \cite{Fi,Sch,TW}), $u$ is strictly convex.	
Hence we can define an injective mapping $\mathcal{P} : B_R \to \mathbb{R}^2$ by
	\begin{equation}\label{cotrans}
		x \mapsto y=\mathcal{P}(x)=(u_{x_1}, x_2).
	\end{equation}	
This transform $\mathcal{P}$ has historically been used in the characteristic theory of Monge-Amp\`ere equations, see Schulz's book \cite{Sch} for more details, and has also been applied to degenerate Monge-Amp\`ere equations by Guan in \cite{G97}, while the above work seems not invoke the function $u^*$ in \eqref{pLeng}. 
By differentiation, it is easy to check that
	\begin{equation}
		u^*_{y_1} = x_1,\quad u^*_{y_2} = -u_{x_2},
	\end{equation}
and
	\begin{equation}\label{u2trans}
		u^*_{y_1y_1} = \frac{1}{u_{x_1x_1}}, \ \ u^*_{y_1y_2} = -\frac{u_{x_1x_2}}{u_{x_1x_1}}, \ \ u^*_{y_2y_2} = -\frac{f}{u_{x_1x_1}}.
	\end{equation}
Therefore, $u^*$ satisfies the quasilinear elliptic equation
	\begin{equation}\label{quasi}
		u^*_{y_1y_1}\, f(u^*_{y_1}, y_2) + u^*_{y_2y_2} = 0 \quad\mbox{in  } \mathcal{P}(B_R).
	\end{equation}
From the assumption that 	$0<C_0^{-1}\leq f\leq C_0$, equation \eqref{quasi} is uniformly elliptic. 
		 
\begin{lemma} \label{lemball}
There exists a constant $\delta>0$ depending on the modulus of convexity $m$ defined in \eqref{mconv},
such that $B_\delta(0)\subset \mathcal{P}(B_R)$.
\end{lemma}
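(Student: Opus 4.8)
The plan is to show that the image $\mathcal{P}(B_R)$ contains a fixed ball around the origin by exploiting the strict convexity of $u$, quantified through the modulus of convexity $m$. First I would normalise: since $u(0)=0$ and $Du(0)=0$, the supporting function at the origin is $\ell_0\equiv 0$, so $u(x)\geq 0$ on $B_R$ and, by \eqref{mconv}, $u(x)\geq m(t)>0$ whenever $|x|>t$. The point $\mathcal{P}(0)=(u_{x_1}(0),0)=(0,0)$ is in the image, so I want to produce $\delta>0$ such that every $y=(y_1,y_2)$ with $|y|<\delta$ is hit.

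The key step is to control, for each fixed slice $x_2=y_2$ with $|y_2|$ small, the range of the slope $\partial_{x_1}u(\cdot,y_2)$. Fix such a $y_2$; the function $g(x_1):=u(x_1,y_2)$ is convex on its interval of definition, and I would locate its minimum point $x_1^{*}=x_1^{*}(y_2)$, which is close to $0$ for small $|y_2|$ (here one uses continuity of $Du$ and $u(0)=Du(0)=0$; alternatively, $|y_2|$ small forces the minimum of $u$ on the slice to be near the origin because $u$ grows like $m$ away from $0$). Then $\partial_{x_1}u(x_1^{*},y_2)=0$, so $(0,y_2)\in\mathcal{P}(B_R)$ for $|y_2|$ below some threshold depending on $m$ and $R$. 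To capture a genuine neighbourhood in the $y_1$-direction as well, I move a fixed distance $r$ (with $r<R/2$, say) to the right and left of $x_1^{*}$ along the slice: by convexity and the lower bound $u\geq m$ outside $B_t$, the one-sided difference quotients satisfy
\[
	\partial_{x_1}u(x_1^{*}+r,\,y_2)\ \geq\ \frac{g(x_1^{*}+r)-g(x_1^{*})}{r}\ \geq\ \frac{m(r)-u(x_1^{*},y_2)}{r},
\]
and symmetrically on the left, where $u(x_1^{*},y_2)\to 0$ as $|y_2|\to 0$. Choosing $r$ fixed and then $|y_2|$ small enough makes the right-hand side at least some $\delta_0=\delta_0(m,R)>0$, and likewise the left-hand slope is at most $-\delta_0$. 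Since $\partial_{x_1}u(\cdot,y_2)$ is continuous and nondecreasing, it takes every value in $[-\delta_0,\delta_0]$ on that slice, so $\{y_1\}\times\{y_2\}\subset\mathcal{P}(B_R)$ for all $|y_1|\le\delta_0$. Intersecting the two constraints, setting $\delta:=\min\{\delta_0,\,\text{threshold on }|y_2|\}$ gives $B_\delta(0)\subset\mathcal{P}(B_R)$.

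The main obstacle I anticipate is making the dependence of the minimum point $x_1^{*}(y_2)$ and of the value $u(x_1^{*},y_2)$ on $y_2$ quantitatively controlled purely in terms of $m$ (and the radius $R$), without smuggling in an a priori gradient bound for $u$ — that would be circular, since controlling $D^2u$ is the whole point. The clean way around this is to avoid pointwise gradient estimates entirely and argue only with the convex geometry: for $|y_2|$ small, the restriction $g(x_1)=u(x_1,y_2)$ is a convex function that is $0$-ish near $x_1=0$ (more precisely, bounded above on a definite sub-interval by a quantity tending to $0$ with $|y_2|$, using only $u\ge 0$, $u(0)=0$, and upper-semicontinuity/local boundedness of the convex function $u$) yet at least $m(r)$ at distance $r$; such a function must have slope of both signs of magnitude $\gtrsim m(r)/r$ at the appropriate points. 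All constants that appear are then functions of $m$ and $R$ only, which is exactly the form asserted in the lemma.
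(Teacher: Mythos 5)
Your argument is essentially correct but takes a genuinely different route from the paper's. The paper's proof is boundary-based and very short: it sets $m_0 := \inf_{\partial B_R}u \geq m(R)$; for any $\hat x \in \partial B_R$, the supporting-plane inequality $m_0 \leq u(\hat x) \leq Du(\hat x)\cdot\hat x \leq |u_1(\hat x)|R + |Du|_{\partial B_R}|\hat x_2|$ together with a dichotomy on the size of $|\hat x_2|$ gives $|\mathcal{P}(\hat x)|\geq \delta := \min\{m_0/(2|Du|_{\partial B_R}),\ m_0/(2R)\}$, and then $B_\delta(0)\subset\mathcal{P}(B_R)$ follows because $\mathcal{P}$ is injective and continuous with $\mathcal{P}(0)=0$ (a topological step, via invariance of domain or degree theory, which the paper leaves implicit). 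Your approach is interior and slice-by-slice, resolving each line $\{x_2=y_2\}$ by the one-dimensional intermediate value theorem applied to the monotone map $x_1\mapsto u_{x_1}(x_1,y_2)$; this is more elementary (the global topology collapses to a 1D IVT) but quantitatively bulkier, since you must locate $x_1^{*}(y_2)$ and estimate $u(x_1^{*},y_2)$, and both steps require a modulus of continuity of $u$ at the origin, hence $\sup_{B_R}u$. Your worry about circularity is misplaced: the paper itself lets $\delta$ depend on the first-order quantity $|Du|_{\partial B_R}$ (which at the very end is controlled by $\sup_{B_R}|u|/R$ using interior convexity), and this is not circular because the target is a second-order bound. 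Two small repairs to your sketch: first, the step $g(x_1^{*}+r)\geq m(r)$ requires $|(x_1^{*}+r,y_2)|>r$, which can fail when $x_1^{*}<0$; you should first force $|x_1^{*}|<r/2$ (possible for $|y_2|$ small since $m(|x_1^{*}|)\leq u(x_1^{*},y_2)\to 0$) and then use $m(r/2)$ in place of $m(r)$. Second, the paper's explicit form $\delta=m_0/(2|Du|_{\partial B_R})$ (in the regime $|Du|_{\partial B_R}>R$) is inserted verbatim into Corollary~\ref{co21} to obtain \eqref{C2co}, so if one substitutes the $\delta$ your argument produces, the algebra leading to that corollary must be reworked accordingly.
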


\begin{proof}
Denote $m_0:=\inf_{\partial B_R}u$, thus $m_0\geq m(R)$. 
Let $\hat x\in\partial B_R$. By the convexity of $u$, we have
	\begin{equation}\label{estconv}
	\begin{split}
		m_0 & \leq u(\hat x) \leq Du(\hat x)\cdot \hat x \\ 
			& \leq |u_1(\hat x)| \cdot |\hat x_1| + |u_2(\hat x)| \cdot |\hat x_2| \\
			& \leq |u_1(\hat x)| \cdot R + |Du|_{\partial B_R} \cdot |\hat x_2|.
	\end{split}
	\end{equation}
One can see that 
\begin{itemize}
\item[$(i)$] if $|\hat x_2| \geq \frac{m_0}{2|Du|_{\partial B_R}}$, then $|\mathcal{P}(\hat x)| \geq |\hat x_2| \geq \frac{m_0}{2|Du|_{\partial B_R}}$;
\item[$(ii)$] if $|\hat x_2| < \frac{m_0}{2|Du|_{\partial B_R}}$, then by \eqref{estconv}, $|\mathcal{P}(\hat x)| \geq |u_1(\hat x)| \geq \frac{m_0}{2R}$.
\end{itemize}

Therefore, by setting
	\begin{equation}\label{del}
		\delta := \min\left\{ \frac{m_0}{2|Du|_{\partial B_R}}, \ \  \frac{m_0}{2R}\right\},
	\end{equation}
we have $B_\delta(0)\subset\mathcal{P}(B_R)$.
\end{proof}

\begin{lemma}
There exists a constant $C_1>0$ depending on $|f|_\alpha$, such that 
	\begin{equation} 
		|D^2u^*(0)|  \leq C_1\frac{\|u^*\|_{L^\infty(B_\delta)}}{\delta^2} \left( \frac{\|u^*\|_{L^\infty(B_\delta)}}{\delta^2} +1 \right). 
	\end{equation}
\end{lemma}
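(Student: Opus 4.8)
The plan is to normalise the equation \eqref{quasi} to the unit ball and then read off the bound from the interior estimates for uniformly elliptic equations in two variables together with the Schauder theory. By Lemma~\ref{lemball}, $u^*$ solves the uniformly elliptic equation \eqref{quasi} on $B_\delta$. I would set
\[
 v(z):=\delta^{-2}u^*(\delta z),\qquad z\in B_1,
\]
so that $D^2v(z)=D^2u^*(\delta z)$, hence $D^2v(0)=D^2u^*(0)$, while $\|v\|_{L^\infty(B_1)}=N$ with $N:=\delta^{-2}\|u^*\|_{L^\infty(B_\delta)}$. A direct computation from \eqref{quasi} shows that $v$ satisfies the linear uniformly elliptic equation
\[
 a(z)\,v_{z_1z_1}+v_{z_2z_2}=0\quad\text{in }B_1,\qquad a(z):=f\big(\delta v_{z_1}(z),\delta z_2\big),\quad C_0^{-1}\le a\le C_0 .
\]
It therefore suffices to prove $|D^2v(0)|\le C_1 N(N+1)$.

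First I would obtain an interior $C^{1,\beta}$ bound for $v$. Viewing $a$ merely as a bounded measurable coefficient of ellipticity ratio $\le C_0^2$, the two-dimensional interior estimate for non-divergence uniformly elliptic equations (the Cordes condition being automatic in dimension two; see \cite[Ch.~12]{GT}) gives
\[
 \|v\|_{C^{1,\beta}(B_{3/4})}\le C\,\|v\|_{L^\infty(B_1)}=CN
\]
for some $\beta=\beta(C_0)\in(0,1)$ and $C=C(C_0)$; in particular $[v_{z_1}]_{C^\beta(B_{3/4})}\le CN$. Next, since $f\in C^\alpha$ and $v_{z_1}\in C^\beta(B_{3/4})$, the coefficient $a=f(\delta v_{z_1},\delta z_2)$ is a composition of a $C^\alpha$ function with a $C^\beta$ map, so $a\in C^\gamma(B_{3/4})$ with $\gamma=\alpha\beta$ and, by an elementary computation,
\[
 [a]_{C^\gamma(B_{3/4})}\le C\,|f|_\alpha\,(N+1)^\alpha ,
\]
with $C=C(C_0,\alpha)$ (here one may assume $0<\delta\le1$). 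Then I would apply the interior Schauder estimate \cite[Thm.~6.2]{GT} to $a\,v_{z_1z_1}+v_{z_2z_2}=0$ on $B_{3/4}$, which yields
\[
 |D^2v(0)|\le\|v\|_{C^{2,\gamma}(B_{1/2})}\le C\,\|v\|_{L^\infty(B_{3/4})}\le CN ,
\]
with $C$ depending on $C_0$, $\gamma$ and $\|a\|_{C^\gamma(B_{3/4})}$. Inserting the bound for $[a]_{C^\gamma}$ and using $\alpha<1$ turns this into $|D^2v(0)|\le C_1 N(N+1)$ with $C_1=C_1(C_0,\alpha,|f|_\alpha)$, and undoing the scaling ($D^2v(0)=D^2u^*(0)$, $N=\delta^{-2}\|u^*\|_{L^\infty(B_\delta)}$) gives the claim.

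The part I expect to be the main obstacle is the very last step: one must control precisely how the interior (Schauder) constant depends on the Hölder norm of $a$, so that only the \emph{quadratic} factor $N(N+1)$, and not a higher power of $N$, survives. A crude application of the perturbative Schauder estimate — localising to a scale on which the oscillation of $a$ is small — would only give a power $N^{1+2/\beta}$ through the dependence on $[a]_{C^\gamma}$; recovering the stated quadratic power presumably requires exploiting that the quasilinearity here is extremely mild (the coefficient lies between the fixed constants $C_0^{-1},C_0$ and depends on $v$ only through $Dv$, whose $C^\beta$-seminorm is already controlled linearly by $N$) together with the sign structure $v_{z_1z_1}\ge0\ge v_{z_2z_2}$ coming from \eqref{u2trans}. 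By contrast, the two-dimensional $C^{1,\beta}$ estimate and the composition estimate for $a$ are routine.
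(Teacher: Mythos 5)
Your proposal reproduces, in normalised form, exactly the argument in the paper: both pass to the quasilinear uniformly elliptic equation \eqref{quasi}, both invoke a two-dimensional interior $C^{1,\beta}$ estimate with $\beta$ depending only on the ellipticity ratio (the paper uses \cite[Thm.~12.4]{GT}; your appeal to the Cordes/Morrey theory is equivalent here), both then obtain H\"older continuity of the coefficient $a=f(u^*_{y_1},y_2)$ by composition, and both close with the interior Schauder estimate \cite[Thm.~6.2]{GT}, \cite{W06}. The concern you flag at the end is, however, real, and it applies equally to the paper's own proof: the paper simply writes $|u^*|_{2,\alpha}\le C_\Lambda\,\delta^{-(2+\alpha)}|u^*|_0$ as if the Schauder constant were linear in the H\"older seminorm $C_\Lambda$ of the coefficient, without tracing the dependence. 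A standard scaling argument (rescale $u\mapsto r^{-2}u(r\cdot)$ with $r\sim[a]_\gamma^{-1/\gamma}$ to bring the coefficient seminorm down to order one) shows the natural dependence is $[a]_\gamma^{2/\gamma}$, which in your normalisation gives $|D^2v(0)|\lesssim N^{1+2/\beta}$ rather than $N(N+1)$ — exactly the higher power you anticipated. The good news is that this discrepancy is harmless for Theorem~\ref{mainthm}: replacing the quadratic $N(N+1)$ by any fixed polynomial $N^p$ only changes the power of $m_0$ and $|Du|_{\partial B_R}$ in Corollary~\ref{co21}, and in the final combination with Lemma~\ref{lemc} these polynomial factors are absorbed into the exponential $e^{C_2 b^2/R^2}$ (the paper already allows ``larger constants $C_1,C_2$'' at that step). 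So your plan, as written, proves the Lemma only up to a worse (but still polynomial) power of $\|u^*\|_{L^\infty}/\delta^2$; this is the same gap the paper has, and it does not affect the paper's main result.
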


\begin{proof}
The proof is essentially from \cite[Theorem 12.4]{GT} for quasilinear elliptic equations in dimension two, and is standard for Laplace's equation when $f\equiv1$ in \eqref{quasi}.
In order to trace the dependency of constants and for the sake of completeness, we outline some main steps as follows. 
In fact, from \cite[Theorem 12.4]{GT}, there exists some $\hat\alpha>0$ depending on $C_0$ such that in $B_r$, ($r={\delta/2}$)
	\begin{equation}\label{quasialpha}
		[u^*]_{1,\hat\alpha} \leq \frac{C}{\delta^{1+\hat\alpha}}|u^*|_0,
	\end{equation}
for a universal constant $C>0$. Unless otherwise indicated, the universal constant $C$ may vary through the context. 
Recall that $x_1=u^*_{y_1}$ is bounded. From \eqref{quasialpha} we know the function $u^*_{y_1}=u^*_{y_1}(y_1, y_2)$ is $C^{\hat\alpha}$ continuous and for any $y, y'\in B_{\delta/2}$, 
	\begin{equation*}
		\frac{|u^*_{y_1}(y)-u^*_{y_1}(y')|}{|y-y'|^{\hat\alpha}} \leq \frac{C}{\delta^{1+\hat\alpha}}|u^*|_0. 
	\end{equation*}

By the assumption $f\in C^{\alpha}$ and let $\alpha'=\alpha\hat\alpha \in(0,1)$, then one has $f(y)=f(u^*_{y_1}, y_2)$ is $C^{\alpha'}$ continuous in $y$, namely for any $y, y'\in B_{\delta/2}$, 
	\begin{equation}\label{combalpha}
	\begin{split}
		\frac{|f(y)-f(y')|}{|y-y'|^{\alpha'}} &\leq C|f|_{C^\alpha} \big(|u^*_{y_1}|_{\hat\alpha}^\alpha + 1\big) \\
				&\leq C_1\left(\frac{|u^*|_0^\alpha}{\delta^{\alpha(1+\hat\alpha)}} + 1\right)=: C_\Lambda,
	\end{split}
	\end{equation}
where the constant $C_1$ depends on $|f|_\alpha$.  
This implies that the coefficient of the quasilinear elliptic equation \eqref{quasi} is $C^{\alpha'}$ continuous.

Last, by the standard Schauder theory in \cite[Theorem 6.2]{GT}, see also \cite{W06}, that in $B_{\delta/2}$ one has
	$
		|u^*|_{2,\alpha} \leq \frac{C_\Lambda}{\delta^{2+\alpha}}|u^*|_0,
	$
and in particular, 
	\begin{equation}\label{C2at0} 
		|D^2u^*(0)|  \leq \frac{C_\Lambda}{\delta^2}\|u^*\|_{L^\infty(B_\delta)}.
	\end{equation}
Note that when $f\equiv1$, the above estimate \eqref{C2at0} is classic for Laplace's equation, see \cite[Theorem 2.10]{GT}.
Consequently, from \eqref{combalpha} we obtain that
	\begin{equation} \label{C2u*}
		|D^2u^*(0)|  \leq C_1\frac{\|u^*\|_{L^\infty(B_\delta)}}{\delta^2} \left( \frac{\|u^*\|_{L^\infty(B_\delta)}}{\delta^2} +1 \right).  
	\end{equation}
\end{proof}

\begin{remark}\label{re23}
\emph{
It is worth noting that by \cite{W06}, it suffices to assume that $f$ is Dini continuous, namely $\int_0^1\frac{\omega(r)}{r}\,dr<\infty$, where $\omega(r)=\sup_{|x-y|<r}|f(x)-f(y)|$. In that case, $C_\Lambda$ in \eqref{C2at0} will depend on the Dini norm of $f$ instead of $|f|_\alpha$. 
}
\end{remark}

We can now derive an interior $C^2$ estimate for $u$ in terms of its modulus of convexity. 
\begin{corollary}\label{co21}
Under the same hypotheses of Theorem \ref{mainthm}, one has
	\begin{equation}\label{C2co}
		|D^2u(0)| \leq \frac{C_1R^2}{m_0^4}|Du|^6_{\partial B_R} + C_1,
	\end{equation}	
where $m_0=\inf_{\partial B_R}u$ given in Lemma \ref{lemball}.
\end{corollary}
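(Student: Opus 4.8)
The plan is to combine the three preceding results—Lemma \ref{lemball} (the ball $B_\delta(0)\subset\mathcal P(B_R)$), the Schauder estimate \eqref{C2u*} for $u^*$, and the pointwise inversion formula \eqref{u2trans}—and then translate everything back from the $y$-coordinates to the $x$-coordinates at the origin. The key observation is that $\mathcal P(0)=(u_{x_1}(0),0)=(0,0)$ since $Du(0)=0$, so the point $0\in B_R$ maps to $0\in\mathcal P(B_R)$ and the estimate \eqref{C2u*} at the origin is exactly what we need to control $D^2u(0)$.

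First I would record that from \eqref{u2trans}, at $y=0$ we have $u_{x_1x_1}(0)=1/u^*_{y_1y_1}(0)$, $u_{x_1x_2}(0)=-u^*_{y_1y_2}(0)/u^*_{y_1y_1}(0)$, and $u_{x_2x_2}(0)=(u^*_{y_1y_2}(0)^2 - f(0)u^*_{y_1y_1}(0)u^*_{y_2y_2}(0))/u^*_{y_1y_1}(0)^2$ (using $\det D^2u=f$ together with the relation $u^*_{y_2y_2}=-f/u_{x_1x_1}$). Hence $|D^2u(0)|$ is bounded by a quantity of the form $C(1+|D^2u^*(0)|^2)/u^*_{y_1y_1}(0)^2$ for a constant $C$ depending on $C_0$; the only genuinely new ingredient is a lower bound for $u^*_{y_1y_1}(0)$, equivalently an upper bound for $u_{x_1x_1}(0)$. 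But here the uniform ellipticity of \eqref{quasi} saves us: since $u^*_{y_1y_1}f + u^*_{y_2y_2}=0$ with $C_0^{-1}\le f\le C_0$, the Hessian $D^2u^*$ has eigenvalues that are comparable, so $u^*_{y_1y_1}(0)$ is bounded below by $c\,\det D^2u^*(0)^{1/2}$ and also (trivially) by $c/|D^2u^*(0)|$ after noting $\det D^2u^*=1/f$ by \eqref{u2trans}. Thus $1/u^*_{y_1y_1}(0)\le C_0^{1/2}|D^2u^*(0)|$, and altogether $|D^2u(0)|\le C\,(1+|D^2u^*(0)|)^{4}$ for a constant depending only on $C_0$.

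Next I would feed in the two quantitative bounds. From \eqref{del}, $\delta\ge m_0/(2\max\{R,|Du|_{\partial B_R}\})$, and since $R\le \mathrm{diam}\le C|Du|_{\partial B_R}/(\text{something})$—more simply, since $|Du|_{\partial B_R}\ge m_0/R$ by \eqref{estconv} with $\hat x_2$ small, we may assume $|Du|_{\partial B_R}\ge R$ (or just keep $\delta^{-1}\le 2|Du|_{\partial B_R}/m_0 + 2R/m_0$)—so $1/\delta^2 \le C|Du|^2_{\partial B_R}/m_0^2$ up to lower-order terms. For $\|u^*\|_{L^\infty(B_\delta)}$ I would use that $u^*(y)=x_1y_1-u(x_1,y_2)$ with $|x_1|\le R$ and $|y_1|\le\delta\le|Du|_{\partial B_R}$ and $|u|$ bounded on $B_R$, which gives $\|u^*\|_{L^\infty(B_\delta)}\le CR|Du|_{\partial B_R}$, and in fact one expects to absorb this into the $|Du|_{\partial B_R}$ factors; tracking carefully, $\|u^*\|_{L^\infty(B_\delta)}/\delta^2 \le C R|Du|^3_{\partial B_R}/m_0^2$. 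Plugging into \eqref{C2u*} gives $|D^2u^*(0)|\le C_1(R|Du|^3_{\partial B_R}/m_0^2)^2 + C_1 = C_1 R^2|Du|^6_{\partial B_R}/m_0^4 + C_1$, and since $|D^2u(0)|\le C(1+|D^2u^*(0)|)$—the fourth power collapses because the dominant term in $|D^2u^*(0)|$ is already large, or one simply re-examines the algebra and finds the powers match the stated \eqref{C2co}—we obtain \eqref{C2co}.

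The main obstacle I anticipate is bookkeeping the exact powers of $|Du|_{\partial B_R}$, $R$, and $m_0$: the passage from $D^2u^*$ back to $D^2u$ involves division by $u^*_{y_1y_1}$, and whether that multiplies $|D^2u^*(0)|$ by itself (giving the quadratic in \eqref{C2u*} squared) or whether uniform ellipticity lets one replace $1/u^*_{y_1y_1}(0)$ by a harmless constant times a single power of $|D^2u^*(0)|$ is exactly what determines the exponent $6$ and power $R^2/m_0^4$ in the statement. I would resolve this by being careful that $\det D^2u^* = 1/f$ is bounded above and below, so that controlling $|D^2u^*(0)|$ controls \emph{both} eigenvalues and hence $1/u^*_{y_1y_1}(0)$ linearly in $|D^2u^*(0)|$; this is the step where the factor in \eqref{C2u*} of the form $t(t+1)$ with $t=\|u^*\|_{L^\infty}/\delta^2$ gets matched against $R^2|Du|^6/m_0^4$ after substituting $t\le CR|Du|^3/m_0^2$, so that $t^2\le CR^2|Du|^6/m_0^4$ is the leading term and the linear term contributes only to the additive $C_1$.
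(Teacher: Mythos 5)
There is a genuine gap in your proposal, and it is precisely the step you flagged as the ``main obstacle''.

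The paper opens the proof of Corollary \ref{co21} with a rotation of coordinates so that $u_{11}(0)=\inf_{e\in\mathbb{S}^1}u_{ee}(0)$ is the \emph{smallest} eigenvalue of $D^2u(0)$. Since $\det D^2u(0)=f(0)\le C_0$, the largest eigenvalue is $f(0)/u_{11}(0)$, hence $|D^2u(0)|\le C_0/u_{11}(0)=C_0\,u^*_{y_1y_1}(0)\le C_0\,|D^2u^*(0)|$, a \emph{linear} bound with no stray factor of $1/u^*_{y_1y_1}(0)$. Your proposal omits this rotation. Without it, the transform direction $e_1$ could be the large-eigenvalue direction; then $u_{11}(0)$ is large, $u^*_{y_1y_1}(0)=1/u_{11}(0)$ is small, and the inequality $1/u^*_{y_1y_1}(0)\le C|D^2u^*(0)|$ that you rely on is simply false. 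Concretely, take $u_{11}(0)=M$ large, $u_{22}(0)=f(0)/M$, $u_{12}(0)=0$: then $|D^2u^*(0)|\sim C_0/M$ while $1/u^*_{y_1y_1}(0)=M$, so the claimed inequality would force $M\lesssim1$, which is exactly the conclusion you are trying to prove — the argument is circular there.

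Your patch via ``$\det D^2u^*=1/f$'' does not rescue this, because the identity is wrong. From \eqref{u2trans},
\begin{equation*}
\det D^2u^* \;=\; u^*_{y_1y_1}u^*_{y_2y_2}-(u^*_{y_1y_2})^2 \;=\; \frac{-f-u_{x_1x_2}^2}{u_{x_1x_1}^2} \;=\; -\frac{u_{x_2x_2}}{u_{x_1x_1}},
\end{equation*}
using $f=u_{x_1x_1}u_{x_2x_2}-u_{x_1x_2}^2$. This is negative ($D^2u^*$ is indefinite, which is the whole point of the transform) and is not bounded; indeed in the bad regime it is exactly the large ratio $u_{22}/u_{11}$. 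So ``$\det D^2u^*$ bounded above and below'' cannot be used to pin down $u^*_{y_1y_1}(0)$, and uniform ellipticity of \eqref{quasi} only relates $u^*_{y_1y_1}$ to $u^*_{y_2y_2}$ pointwise; it says nothing about the size of $u^*_{y_1y_1}$ itself. Finally, even granting a constant lower bound on $u^*_{y_1y_1}(0)$, your route yields $|D^2u(0)|\lesssim 1+|D^2u^*(0)|^2$ rather than the linear $|D^2u(0)|\le C_0|D^2u^*(0)|$, which after substituting \eqref{C2u*} would produce the exponent $12$ on $|Du|_{\partial B_R}$ instead of the stated $6$. The rotation is the missing ingredient that both legitimizes the lower bound on $u^*_{y_1y_1}(0)$ and collapses the estimate to the linear one. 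The remainder of your bookkeeping — the bounds on $\delta$ via \eqref{del} and on $\|u^*\|_{L^\infty(B_\delta)}$ via \eqref{C0u*}, and the reduction to the case $|Du|_{\partial B_R}>R$ — matches the paper.
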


\begin{proof}
By a rotation of coordinates we may assume that $u_{11}(0)=\inf_{e\in\mathbb{S}^1}u_{ee}(0)$. 
From \eqref{u2trans} and equation \eqref{MA}, we have
	\begin{equation}\label{C2uat0}
	\begin{split}
		|D^2u(0)| \leq \frac{C_0}{u_{11}(0)} & \leq C_0|D^2u^*(0)| \\
			& \leq C_1\frac{\|u^*\|_{L^\infty(B_\delta)}}{\delta^2} \left( \frac{\|u^*\|_{L^\infty(B_\delta)}}{\delta^2} +1 \right). 
	\end{split}
	\end{equation}
where the last step is due to \eqref{C2u*}. 
Note that from \eqref{pLeng},
	\begin{equation} \label{C0u*}
		\|u^*\|_{L^\infty(B_\delta)}  \leq |x_1||y_1|  \leq R\,|Du|_{\partial B_R}.
	\end{equation}
Observe that in \eqref{del} we may assume $|Du|_{\partial B_R}  > R$ (for otherwise, $|D^2u(0)|$ is bounded), and thus $\delta=\frac{m_0}{2|Du|_{\partial B_R}}$.
Combining $\delta$ and \eqref{C0u*} into \eqref{C2uat0}, we obtain
	\begin{equation}\label{C2uu}
	\begin{split}
		|D^2u(0)| &\leq C_1\frac{R\,|Du|_{\partial B_R}^3}{m_0^2}\left(\frac{R\,|Du|_{\partial B_R}^3}{m_0^2}+1\right) \\
			&\leq 2C_1\left(\frac{R^2\,|Du|_{\partial B_R}^6}{m_0^4}+1\right).
	\end{split}
	\end{equation}
\end{proof}

\begin{remark}
\emph{
The estimate \eqref{C2uu} reveals a relation between the interior $C^2$ bound and the strict convexity of $u$. 
Consider a typical solution of \eqref{MA} with $f\equiv1$,
	\begin{equation} \label{typical}
		u(x) = \frac{\eps}{2}x_1^2+\frac{1}{2\eps}x_2^2, 
	\end{equation}
one has $|D^2u(0)|=\eps^{-1}$, $|Du|_{\partial B_R}=\eps^{-1}R$ and $m_0=\eps R^2/2$.
The estimate \eqref{CHOest} obtained in \cite{CHO} is 
	\begin{equation*}\label{estChen}
		\eps^{-1} = |D^2u(0)| \leq C_1 e^{C_2 \sup|Du|^2/R^2} \leq  C_1e^{C_2\eps^{-2}},
	\end{equation*}
where the upper bound is of exponential order $e^{C\eps^{-2}}$. 
Instead, the estimate \eqref{C2uu} gives
	\begin{equation}
		\eps^{-1} = |D^2u(0)| \leq C_1\left( \eps^{-10} + 1 \right),
	\end{equation}
where the upper bound is of polynomial order $\eps^{-10}$.
}
\end{remark}

\subsection{Modulus of convexity}\label{s22}

In order to derive the estimate \eqref{est3} from \eqref{C2uu}, we need an estimate on the modulus of convexity $m$ of $u$, which is defined in \eqref{mconv}. 
Knowing $u$ is strictly convex in dimension two (or equivalently $m>0$) is not sufficient, it indeed requires an explicit estimate on the lower bound of $m$ in terms of $u$. 
By following the approach in \cite[\S3]{TW}, we are able to obtain that:

\begin{lemma}\label{lemc}
Let $m=m(t)$ be the modulus of convexity of $u$. 
For any $t>0$, assume that $m(t)$ is attained at $z=0$ and the support function $\ell_z\equiv0$. 
Denote $b:=|Du|_{\partial B_t}$. Then,
	\begin{equation}\label{sconv}
		m(t) \geq \frac{\frac12bt}{e^{C_2\frac{b^2}{t^2}} - 1},
	\end{equation}
where $C_2=128C_0$ is a universal constant.  
\end{lemma}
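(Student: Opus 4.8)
In substance the estimate is one--sided: the reverse inequality $m(t)\le bt$ is immediate, since by convexity and the normalization $u(0)=0$, $Du(0)=0$ one has $u(x)\le Du(x)\cdot x\le b\,|x|$ on $B_t$, hence $m(t)=\min_{\p B_t}u\le bt$. So the content is the lower bound, and the plan is to follow and elaborate the scale--by--scale analysis of \cite[\S3]{TW}. The objects of study are the sublevel sets (sections) $S_h:=\{x\in B_t:\ u(x)<h\}$ for $0<h\le m:=m(t)$: under the normalization each $S_h$ is an open bounded convex neighbourhood of $0$, the $S_h$ increase to $S_m$ as $h\uparrow m$ and decrease to $\{0\}$ as $h\downarrow 0$, and $S_m\subseteq\ol{B_t}$ because $u\ge m$ on $\p B_t$, hence on $\R^2\setminus B_t$. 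Two preliminary facts are needed. First, a two--sided bound of size by height: normalizing $S_h$ by its John ellipsoid (affine maps $T_h$ with $B_1\subseteq T_h(S_h)\subseteq B_2$) and applying the Aleksandrov maximum principle to $h-u$ on $S_h$, one obtains
\beq
	c(C_0)\,h\ \le\ |S_h|\ \le\ C(C_0)\,h,\qquad 0<h\le m,
\eeq
whence $\det T_h\asymp_{C_0}h^{-1}$; moreover, since $u\le b|x|$, the minimal width of $S_h$ is at least $h/b$. Second, a \emph{gradient budget}: $Du$ is injective on $B_t$ with $|Du|\le b$ there, so for a nested chain $S_{h_0}\supset S_{h_1}\supset\cdots$ of sections with $h_k\downarrow 0$,
\beq
	\sum_{k\ge 0}\Big(\,\int_{S_{h_k}}f\,dx-\int_{S_{h_{k+1}}}f\,dx\Big)=\int_{S_{h_0}}f\,dx=|Du(S_{h_0})|\le|Du(B_t)|\le\pi b^2 .
\eeq

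The heart of the argument is an iteration over a geometric sequence of heights $h_k=\rho^{-k}m$ ($k=0,1,2,\dots$), with dilation $\rho=\rho(C_0)>1$ chosen large enough that the area bound above forces a definite fraction of area to be lost from $S_{h_k}$ to $S_{h_{k+1}}$ (this is where the bounds on $f$ enter the final constant). The standard comparability of sections at comparable heights for solutions of $C_0^{-1}\le\det D^2u\le C_0$ gives $\det T_{h_k}\asymp_{C_0}\rho^{k}/m$ together with $\|T_{h_{k+1}}T_{h_k}^{-1}\|,\ \|(T_{h_{k+1}}T_{h_k}^{-1})^{-1}\|\le C(C_0)$, so that in passing from scale $k$ to scale $k+1$ the geometry of the renormalized solution $u_k:=h_k^{-1}\,u\circ T_{h_k}^{-1}$ changes only by a controlled bounded amount; in particular the aspect ratio of $S_{h_k}$ changes by at most a fixed factor per step. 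Compounding this comparison over the scales while at the same time balancing the Monge--Amp\`ere masses of the annuli $S_{h_k}\setminus S_{h_{k+1}}$ against the budget $\pi b^2$, one is led to a first--order geometric recursion linking the data at the top scale $h_0=m$ to those at the scales below; solved in closed form it produces a lower bound for $m(t)$ of the shape $\tfrac12 bt\big/(\rho^{N}-1)$, where $N\asymp b^2/t^2$ is the number of admissible scales, and tracking the constants fixes $\rho$ and the exponent, giving
\beq
	m(t)\ \ge\ \frac{\tfrac12\,bt}{e^{\,C_2 b^2/t^2}-1},\qquad C_2=128\,C_0 .
\eeq

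The main obstacle is precisely this iteration. A single--scale estimate is scale invariant in $h$ and cannot improve on the trivial bound $m(t)\le bt$; the lower bound must be produced by compounding a per--step comparison, and the delicate points are: (i) choosing $\rho$ so that the per--step loss of area is a fixed fraction while the normalizing maps $T_{h_k}$ stay uniformly comparable; (ii) controlling the number of admissible scales by the gradient datum $b$ alone, rather than by a quantity depending on $u$ itself (such as $|D^2u(0)|$), which is what keeps the estimate genuinely interior and fit for use in Corollary~\ref{co21}; and (iii) carrying out the geometric summation so that the $-1$ in the denominator and the value $128\,C_0$ come out correctly. Establishing (ii) carefully is the crux, and is where the elaboration of \cite[\S3]{TW} is really required.
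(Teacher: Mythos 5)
Your proposal does not actually prove the lemma: you set up a Caffarelli-style machinery of sections $S_h$, John normalizations $T_h$, and a gradient budget $|Du(S_{h_0})|\le\pi b^2$, and then \emph{describe} an iteration over the heights $h_k=\rho^{-k}m$ that is supposed to yield the bound, but you never carry out that iteration. You yourself flag the essential steps --- choosing $\rho$, showing the number of admissible scales is $N\asymp b^2/t^2$, and extracting the precise form $\frac{bt/2}{e^{C_2b^2/t^2}-1}$ with $C_2=128C_0$ --- as ``the main obstacle'' and ``the crux,'' which is to say that the part of the argument that would actually establish the inequality is absent. In particular, the assertion that the gradient budget forces a definite per-step area loss, that this compounds into a geometric recursion with exactly the claimed closed form, and that $N$ is controlled by $b^2/t^2$ alone, are all stated without proof; these are not routine, and there is no reason a reader could fill them in from what is written, nor any indication that the constant $128C_0$ would emerge from this route.

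The paper's actual proof is much more elementary and does not use sections, John ellipsoids, normalization maps, or any iteration. After placing the minimum of $u|_{\partial B_t}$ at $(t,0)$ and finding $\tau\in(0,t]$ with $u(-\tau,0)=m(t)$, one bounds $u$ from above and below on the strip $\{-\tau<x_1<t\}\cap B_t$ by $\pm b|x_2|$ shifts of $m(t)$, obtains $|\partial_{x_1}u|\le\frac{8}{t}(m(t)+2b|x_2|)$ on the rectangle $(\tfrac14t,\tfrac34t)\times(-\tfrac14t,\tfrac14t)$ via difference quotients, and hence $\int u_{11}\,dx_1\le\frac{16}{t}(m(t)+2b|x_2|)$. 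The equation gives $u_{11}u_{22}\ge C_0^{-1}$, so $\int 1/u_{22}\,dx_1\le C_0\int u_{11}\,dx_1$, and H\"older's inequality then forces $\int u_{22}\,dx_1\ge\frac{t^3}{64C_0}(m(t)+2b|x_2|)^{-1}$. Integrating in $x_2$ over $(0,\tfrac14t)$, bounding the left side by $bt$, and evaluating the resulting logarithmic integral produces $128C_0\,b^2/t^2\ge\log\bigl(1+\tfrac{bt/2}{m(t)}\bigr)$, which is exactly \eqref{sconv}. I would recommend replacing your sketch by this direct rectangle computation: it is self-contained, a few lines long, uses only the two-sided bound on $f$ and convexity, and actually produces the stated constant.
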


\begin{proof}
Assume that $m(t)=\inf_{\partial B_t}u$ is attained at $x=(t,0)$. 
On the negative $x_1$-axis, since $u(0)=0$ and $m(t)>0$, there is a constant $\tau\in(0,t]$ such that $u(-\tau,0)=m(t)$.

By convexity we have $0\leq u(x_1,0)\leq m(t)$ for $x_1\in(-\tau, t)$, and for any $x\in B_t\cap\{-\tau<x_1<t\}$,
	\begin{align*}
		u(x_1,x_2) & \geq u(x_1,0)-b|x_2|  \geq -b|x_2|, \\
		u(x_1,x_2) & \leq u(x_1,0)+b|x_2| \leq m(t) + b|x_2|.
	\end{align*}
It follows that for any $x\in(\frac14t,\frac34t)\times(-\frac14t,\frac14t)$,
	\begin{equation*}
		\partial_{x_1}u(x) \leq \frac{u(\frac78t,x_2) - u(x_1,x_2)}{\frac18t} \leq \frac8t\big(m(t)+2b|x_2|\big).
	\end{equation*}
Similarly we have $\partial_{x_1}u(x) \geq -\frac8t\big(m(t)+2b|x_2|\big)$. 

From equation \eqref{MA}, $u_{11}u_{22} \geq C_0^{-1}$. Hence
	\begin{equation*}
		\int_{\frac14t}^{\frac34t} \frac{1}{u_{22}}\,dx_1 \leq C_0 \int_{\frac14t}^{\frac34t} u_{11}\,dx_1 \leq \frac{16 C_0}{t}\big(m(t)+2b|x_2|\big).
	\end{equation*}
By H\"older's inequality, we obtain
	\begin{equation*}
		\int_{\frac14t}^{\frac34t} u_{22}\,dx_1 \geq \frac{t^2}{4} \left( \int_{\frac14t}^{\frac34t} \frac{1}{u_{22}}\,dx_1 \right)^{-1} \geq \frac{t^3}{64C_0} \big(m(t)+2b|x_2|\big)^{-1}.
	\end{equation*}	
It follows that
	\begin{align*}
		bt &\geq \int_{\frac14t}^{\frac34t} \left[ \int_0^{\frac14t} u_{22}\,dx_2 \right]\,dx_1 \\
		&=	\int_0^{\frac14t} \left[ \int_{\frac14t}^{\frac34t} u_{22}\,dx_1 \right]\,dx_2\\
		&\geq \frac{t^3}{64C_0} \int_0^{\frac14t} \big(m(t)+2b|x_2|\big)^{-1}\,dx_2.
	\end{align*}
Therefore,
	\begin{equation}
		\frac{64C_0b}{t^2} \geq \int_0^{\frac14t} \frac{dx_2}{m(t) + 2bx_2}.
	\end{equation}

After integration, one has
	\begin{equation}
		128C_0\frac{b^2}{t^2} \geq \log \frac{m(t) + bt/2}{m(t)},
	\end{equation}
and thus
	\begin{equation}
		e^{128C_0\frac{b^2}{t^2}} \geq 1+ \frac{bt/2}{m(t)},
	\end{equation}
which implies \eqref{sconv}, namely
	\begin{equation}
		m(t) \geq \frac{bt/2}{e^{128C_0\frac{b^2}{t^2}} - 1}.
	\end{equation}
\end{proof}

\begin{proof}[Proof of Theorem \ref{mainthm}] 
Now, we are ready to prove Theorem \ref{mainthm} by combining Corollary \ref{co21} and Lemma \ref{lemc}.
Denote $b=|Du|_{\partial B_R}$. From the assumption of Theorem \ref{mainthm}, one has $m_0=\inf_{\partial B_R}u\geq m(R)$. 
Hence, by \eqref{C2co} and \eqref{sconv} we have
	\begin{align}\label{C2inb}
		|D^2u(0)| &\leq C_1\left( \frac{R^2b^6}{m^4(R)} + 1 \right) \nonumber \\
			&\leq C_1\left( \frac{b^2}{R^2}\left(e^{C_2\frac{b^2}{R^2}}-1\right)^4 + 1 \right) \\
			&\leq C_1\left( \frac{b^2}{R^2}e^{C_2\frac{b^2}{R^2}} + 1 \right), \nonumber
	\end{align}
for some larger constants $C_1$ and $C_2$. 

By applying the above estimate \eqref{C2inb} in $B_{R/2}$ and noting that $\sup_{\partial B_{R/2}}|Du| \leq 4\sup_{B_R}|u|/R$, we can obtain
	\begin{equation}
		|D^2u(0)| \leq C_1 \left( \frac{\sup_{B_R}|u|^2}{R^{4}} e^{C_2  \frac{\sup_{B_R}|u|^2}{R^4}} + 1  \right), 
	\end{equation}
where the constant $C_1$ depends on $C_0$ and $\|f\|_{C^\alpha (B_R(0))}$, while $C_2$ depends only on $C_0$. 
\end{proof}

\subsection{Remarks}

\begin{itemize}
\item[(i)]
The above proof of Theorem \ref{mainthm} also applies to Monge-Amp\`ere equation \eqref{MA} with a general right hand side, that is
	\begin{equation*}
		\det\,D^2u(x) = f(x,u,Du)\quad	\text{ for } x\in B_R(0)\subset\mathbb{R}^2.
	\end{equation*}
If $f=f(x,z,p)$ is H\"older continuous with respect to all variables $x, z, p$, and satisfies $0<C_0^{-1}\leq f\leq C_0$, we can obtain the purely interior $C^2$ estimate \eqref{est3} by the same approach.  
\item[(ii)]
As mentioned in Remark \ref{re23}, by the estimate in \cite{W06}, we can reduce $f$ to be merely Dini continuous. 
\end{itemize}


\end{document}